\newtheorem{theorem}{Theorem}[section]
\newtheorem{lemma}[theorem]{Lemma}
\newtheorem{proposition}[theorem]{Proposition}
\newtheorem{cor}[theorem]{Corollary}
\theoremstyle{definition}
\newtheorem{definition}[theorem]{Definition}
\newtheorem{remark}[theorem]{Remark}
\newcommand{\zr}{{\mathbb R}}
\newcommand{\zz}{{\mathbb Z}}
\newcommand{\Rspace}{{\mathbb R}}
\begin{document}

\title{Min-Max Theory for Cell Complexes}
\author{Lacey Johnson}
\email{johnsonla@ufl.edu}
\author{Kevin Knudson}
\email{kknudson@ufl.edu }
\address{Department  of Mathematics,  University of Florida, Gainesville, FL 32611}

\keywords{discrete Morse theory, min-max theory}
\subjclass[2010]{57Q10}
\date{\today}

\begin{abstract} In the study of smooth functions on manifolds, min-max theory provides a mechanism for identifying critical values of a function. In this paper we introduce a discretized version of this theory associated to a discrete Morse function on a (regular) cell complex. As applications we prove a discrete version of the Mountain Pass Lemma and give an alternate proof of a discrete Lusternik-Schnirelmann Theorem.
\end{abstract}

\maketitle

\section{Introduction}\label{intro} Given a smooth function $f:M\to\zr$, where $M$ is a smooth manifold, a collection of {\em min-max data} for $f$ is a pair $({\mathcal H},{\mathcal S})$, where ${\mathcal H}$ is a collection of homeomorphisms of $M$ containing functions that decrease the sublevel sets of $f$ and ${\mathcal S}$ is a collection of subsets of $M$ closed under the collection ${\mathcal H}$. Given such a pair, the min-max principle produces critical values of the function $f$.

In this paper we introduce a discretized version of this theory. While we state results for simplicial complexes, our results hold on the category of regular cell complexes. After reviewing the basics of discrete Morse theory in Section \ref{dmt}, in Section \ref{minmaxdata} we define the notion of min-max data associated to a discrete Morse function $f:K\to\zr$ on a simplicial complex $K$. The collection ${\mathcal H}$ will not be a set of homeomorphisms; rather it consists of functions defined on the power set of the set of cells of $K$. There are various operators associated to the function $f$ which yield the requisite property of decreasing the level sets, and so we are able to obtain a discrete min-max principle which allows us to identify critical values of $f$.

As applications of this, we deduce the following. First, we obtain a discrete version of the Mountain Pass Lemma: given two local minima for $f$, there is a critical edge $e$ ``between" them; see Section \ref{sec:mtnpass} (a proof of the corresponding smooth result may be found in \cite{nico}, p.~89). We also give an alternate proof of a discrete Lusternik-Schnirelmann Theorem as first presented in \cite{scoville}; this is the content of Section \ref{lscat}.

\subsection*{Acknowledgements} The second author spoke about some of these results at the International Conference on Algebra and Related Topics in Rabat, Morocco, July 2018. He thanks the organizers, Driss Bennis in particular, for the invitation and for an enjoyable and stimulating conference.

\section{Discrete Morse Theory}\label{dmt}

Let $K$ be any finite simplicial complex, where $K$ need not be a triangulated manifold nor have any other special property~\cite{forman}. When we write $K$ we mean the set of simplices of $K$; by $|K|$ we mean the underlying topological space. 
Let $\alpha^{(p)} \in K$ denote a simplex of dimension $p$. 
Let $\alpha < \beta$ denote that simplex $\alpha$ is a face of simplex $\beta$. If $f:K\to \Rspace$ is a function
define $U(\alpha) = \{\beta^{(p+1)} > \alpha \mid f(\beta) \leq f(\alpha) \}$ 
and $L(\alpha) = \{\gamma^{(p-1)} < \alpha \mid f(\gamma) \geq f(\alpha) \}$.
In other words, $U(\alpha)$ contains the immediate cofaces of $\alpha$ with lower (or equal) function values, while $L(\alpha)$ contains the immediate  faces of $\alpha$ with higher (or equal) function values. 
Let $|U(\alpha)|$ and $|L(\alpha)|$ be their sizes.  

\begin{definition}
A function $f: K \to \Rspace$ is a \emph{discrete Morse function} if for every $\alpha^{(p)} \in K$, 
(i) $|U(\alpha)| \leq 1$ and 
(ii) $|L(\alpha)| \leq 1$.
\end{definition}

Forman showed that conditions (i) and (ii) are exclusive -- if one of the sets $U(\alpha)$ or $L(\alpha)$ is nonempty then the other one must be empty (\cite{forman}, Lemma 2.5).
Therefore each simplex $\alpha \in K$ can be paired with at most one exception simplex: either a face $\gamma$ with larger function value, or a coface $\beta$ with smaller function value. 
We refer to conditions (i) and (ii) as the \emph{Morse conditions}. 

\begin{definition}
A simplex $\alpha^{(p)}$ is \emph{critical} if (i) $|U(\alpha)| = 0$ and (ii) $|L(\alpha)| = 0$. A {\em critical value} of $f$ is its value at a critical simplex. 
\end{definition}

\begin{definition}
A simplex $\alpha^{(p)}$ is \emph{regular} if either of the following conditions holds: (i) $|U(\alpha)| = 1$; (ii) $|L(\alpha)| = 1$; as noted above these conditions cannot both be true (\cite{forman}, Lemma 2.5). 
\end{definition}

Note that regular simplices occur in pairs. Suppose there is a pair $\sigma^{(p)}<\tau^{(p+1)}$ of simplices in $K$ such that $\sigma$ has no other cofaces in $K$. Then $K\setminus\{\sigma,\tau\}$ is a simplicial complex called an {\em elementary collapse} of $K$. We say that $K$ {\em collapses to} $L$ if $L$ can be obtained from $K$ by a sequence of elementary collapses. We denote this by $K\searrow L$. A complex $K$ is {\em collapsible} if $K\searrow \{v\}$ for some vertex $v$.

Two Morse functions $f,g:K\to\zr$ are {\em equivalent} if for every pair $\sigma^{(p)}<\tau^{(p+1)}$ in $K$, $f(\sigma)< f(\tau)$ if and only if $g(\sigma)< g(\tau)$. A proof of the following is embedded in the proof of Theorem 3.3 of \cite{forman}.

 \begin{lemma}\label{excellent} Let $f:K\to\zr$ be a discrete Morse function. Then there is an injective discrete Morse function $g:K\to\zr$ equivalent to $f$ having the same critical simplices as $f$. \hfill $\qed$
 \end{lemma}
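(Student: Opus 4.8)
The plan is to produce $g$ as a sufficiently small perturbation of $f$ that is injective. Set $\epsilon = \min\{|f(\alpha)-f(\beta)| : \alpha,\beta\in K,\ f(\alpha)\neq f(\beta)\}$, which is positive because $K$ is finite. If I write $g(\alpha)=f(\alpha)+\delta_\alpha$ with each $|\delta_\alpha|<\epsilon/2$, then every strict inequality $f(\alpha)<f(\beta)$ is preserved as $g(\alpha)<g(\beta)$; the entire content of the lemma therefore reduces to choosing the perturbations $\delta_\alpha$ so that $g$ is injective and so that the equalities among the values of $f$ are broken in the right direction.

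The key is to understand when $f$ fails to be injective on an immediate face--coface pair. If $\sigma^{(p)}<\tau^{(p+1)}$ satisfies $f(\sigma)=f(\tau)$, then $\tau\in U(\sigma)$ and $\sigma\in L(\tau)$, so $\{\sigma,\tau\}$ is a Morse pair of $f$. By Forman's Lemma (\cite{forman}, Lemma 2.5) each simplex lies in at most one Morse pair, so the tied face--coface pairs are pairwise disjoint. I will break each such tie in the ``collapse'' direction, i.e. so that $g(\tau)<g(\sigma)$; because these pairs are disjoint this imposes no conflicting constraints and can always be realized within the allowed perturbation size. All remaining ties --- between incomparable simplices, between simplices of equal dimension, or between simplices whose dimensions differ by more than one --- are broken arbitrarily to achieve global injectivity. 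None of the latter ties involves an immediate face or coface, so none of them affects any set $U(\cdot)$ or $L(\cdot)$.

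It then remains to verify that $g$ so defined satisfies $U_g(\alpha)=U_f(\alpha)$ and $L_g(\alpha)=L_f(\alpha)$ for every simplex $\alpha$. For a fixed immediate coface $\beta$ of $\alpha$ there are three cases: $f(\beta)<f(\alpha)$, handled by preservation of strict inequalities; $f(\beta)>f(\alpha)$, likewise; and $f(\beta)=f(\alpha)$, which is a tied Morse pair broken so that $g(\beta)<g(\alpha)$. In all three cases $\beta\in U_g(\alpha)$ exactly when $\beta\in U_f(\alpha)$, so $U_g(\alpha)=U_f(\alpha)$, and the symmetric argument applied to pairs $\gamma^{(p-1)}<\alpha$ gives $L_g(\alpha)=L_f(\alpha)$. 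From these equalities everything follows at once: the Morse conditions $|U_g(\alpha)|\le 1$ and $|L_g(\alpha)|\le 1$ hold, so $g$ is a discrete Morse function; the critical simplices, characterized by $U=L=\emptyset$, coincide with those of $f$; and for each immediate pair $\sigma<\tau$ one reads off $f(\sigma)<f(\tau)\iff g(\sigma)<g(\tau)$, which is exactly the equivalence required.

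I expect the only real subtlety to be the consistency of the tie-breaking: a priori one might worry that forcing $g(\tau)<g(\sigma)$ on every tied Morse pair creates a cyclic system of constraints realizable by no injective $g$. The disjointness of the Morse pairs, which is precisely Forman's Lemma, is what rules this out, since it reduces the constraints to a collection of independent two-element orderings. Once that is in hand the perturbation argument is routine.
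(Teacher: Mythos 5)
Your proof is correct and is essentially the same perturbation argument that the paper relies on (it gives no proof of its own, citing the argument embedded in Forman's Theorem 3.3): perturb $f$ by less than half the minimal gap between distinct values, and break each tie on an immediate face--coface pair in the direction $g(\tau)<g(\sigma)$, which is forced by the definition of equivalence. Your observation that tied pairs $f(\sigma)=f(\tau)$ are exactly Morse pairs and hence pairwise disjoint by Forman's Lemma 2.5 is precisely the point that makes the tie-breaking consistent, and your verification that $U_g=U_f$ and $L_g=L_f$ correctly yields the Morse property, the equivalence, and the preservation of critical simplices all at once.
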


\subsection*{Main results of DMT}
Given $c\in \Rspace$, we have the \emph{level subcomplex} $K^c = \cup_{f(\alpha) \leq c} \cup_{\beta \leq \alpha} \beta$. 
That is, $K^c$ contains all simplices $\alpha$ of $K$ such that $f(\alpha) \leq c$ along with all of their faces. 
We have the following two combinatorial versions of the main results of classical Morse theory. The fundamental idea in their proofs is to collapse the sublevel complexes by removing pairs $\{\sigma<\tau\}$ in the discrete gradient vector field of the function $f$.

\begin{theorem}[DMT Part A,~\cite{forman}] 
\label{theorem:dmt-a}
Suppose the interval $(a,b]$ contains no critical value of $f$. 
Then $K^b$ is homotopy equivalent to $K^a$. 
In fact, $K^b$ collapses onto $K^a$. \hfill $\qed$
\end{theorem}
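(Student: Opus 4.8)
The plan is to prove the stronger assertion that $K^b$ collapses onto $K^a$; the homotopy equivalence is then automatic. Since $K$ is finite, $f$ takes only finitely many values, and if none lies in $(a,b]$ then $K^a=K^b$ and there is nothing to do. Otherwise I would first reduce to a single value: listing the values of $f$ in $(a,b]$ as $c_1<\cdots<c_m$ (all regular, by hypothesis) and choosing thresholds $a=t_0<t_1<\cdots<t_m=b$ with $(t_{i-1},t_i]$ meeting the image of $f$ only in $c_i$, one obtains a filtration $K^a=K^{t_0}\subseteq\cdots\subseteq K^{t_m}=K^b$. Because a composition of elementary collapses is again a collapse, it suffices to show $K^{t_i}\searrow K^{t_{i-1}}$, so I may assume $(a,b]$ contains exactly one value $c$ of $f$, and $c$ is regular. (If desired, Lemma~\ref{excellent} lets one replace $f$ by an equivalent injective function with the same matching; I will instead argue directly, since equivalence does not preserve the actual level subcomplexes.)

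Next I would describe the set $N=K^b\setminus K^a$ of simplices that must be removed. Using Forman's structural lemmas—that the two Morse conditions are exclusive (\cite{forman}, Lemma~2.5), and that a face of codimension at least two always has strictly smaller function value—I would show that every simplex of $N$ is regular and lies in a matched pair $\{\sigma^{(p)}<\tau^{(p+1)}\}$ with $f(\sigma)\ge f(\tau)$, \emph{both} of whose members lie in $N$. The value bookkeeping is the point: a simplex of value $\le a$ already lies in $K^a$; a simplex of value $c$ is regular since $c$ is; and a simplex of value $>b$ that nevertheless lies in $K^b$ can only occur as the exceptional lower face of an immediate coface of smaller value, hence is matched upward. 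In every surviving pair the upper simplex $\tau$ has value exactly $c$, while $\sigma$ has value $c$ or $>b$, so $N$ is a disjoint union of gradient pairs.

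It remains to collapse these pairs. I would order the elementary collapses by \emph{decreasing dimension of the upper simplex} $\tau$. To collapse $\{\sigma,\tau\}$ I must know $\sigma$ is free, i.e. that $\tau$ is its only remaining coface. Its matching supplies $\tau$ as the unique immediate coface of $\sigma$ with value $\le f(\sigma)$; every other immediate coface $\tau'$ of $\sigma$ has $f(\tau')>f(\sigma)\ge c$, hence value $>b$. Such a $\tau'$ either fails to lie in $K^b$ at all, or—by the same exceptional-face analysis—is itself the lower simplex of a pair whose upper simplex has dimension $\dim\sigma+2$, and so has already been removed under the top-down ordering. Hence $\sigma$ is free exactly when its pair is collapsed, and after removing all of $N$ in this order, $K^b$ has been collapsed onto $K^a$.

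The main obstacle is this last freeness verification: one must rule out every surviving coface of $\sigma$ other than $\tau$ at the moment of collapse. This is where the exclusivity of the Morse conditions and the reduction to a single value do the real work—together they force any competing coface to have value above $b$ and to be matched into a strictly higher-dimensional pair, so that the chosen ordering eliminates it first. Once this is in place, the remaining steps are routine bookkeeping.
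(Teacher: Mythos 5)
Your overall architecture is sound and is exactly the route the paper intends: Theorem \ref{theorem:dmt-a} is stated here with only a citation to Forman (the text notes the idea is to collapse sublevel complexes by removing gradient pairs), and your plan --- reduce to a single function value $c$, show $N=K^b\setminus K^a$ is a disjoint union of matched pairs, then collapse them in a suitable order --- is a faithful reconstruction of that proof. However, one of the two ``structural lemmas'' you invoke is false: it is \emph{not} true that a face of codimension at least two always has strictly smaller function value. Forman's Lemma 2.5 gives only exclusivity of the Morse conditions; codimension-$\ge 2$ faces are unconstrained. Concretely, on the full triangle with vertices $v_0,x,y$ take $f(x)=0$, $f(y)=1$, $f(xy)=2$, $f(v_0y)=3$, $f(v_0xy)=5$, $f(v_0)=10$, $f(v_0x)=12$. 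One checks every simplex satisfies $|U|\le 1$ and $|L|\le 1$ (the gradient pairs are $\{v_0<v_0y\}$ and $\{v_0x<v_0xy\}$; the critical cells are $x$, $y$, $xy$), yet $v_0<v_0xy$ is a codimension-$2$ face with $f(v_0)=10>5=f(v_0xy)$. So the justification of your key bookkeeping step --- that a simplex $\gamma\in K^b$ with $f(\gamma)>b$ must be an exceptional codimension-$1$ face of a low-valued coface --- does not follow from what you cite.

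Fortunately that conclusion is still true, and it is what you should prove directly. If $\gamma\in K^b$ with $f(\gamma)>b$, choose $\alpha>\gamma$ with $f(\alpha)\le b$ and induct on the codimension $k$ of $\gamma$ in $\alpha$: if $k\ge 2$, the codimension-$1$ faces of $\alpha$ containing $\gamma$ number $k\ge 2$, and since $|L(\alpha)|\le 1$ at most one of them has value $\ge f(\alpha)$, so some such face $\mu$ satisfies $f(\mu)<f(\alpha)\le b$; replace $\alpha$ by $\mu$ and repeat. At $k=1$ you obtain an immediate coface $\beta>\gamma$ with $f(\beta)\le b<f(\gamma)$, whence $U(\gamma)=\{\beta\}$ and $\gamma$ is matched upward \emph{with a partner of value $\le b$}. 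Note that this sharper conclusion (value $\le b$, not merely $\le f(\gamma)$) is also silently needed in your pair analysis: from the matching alone, the partner of a lower simplex of value $>b$ need only have value $\le f(\sigma)$, which does not by itself pin it to $c$; the induction above is what forces $f(\tau)\in(a,b]$, hence $f(\tau)=c$, once $\sigma\notin K^a$. With this patch the rest of your argument checks out: every competing coface $\tau'$ of $\sigma$ has $f(\tau')>f(\sigma)\ge c$, hence value $>b$, hence (if in $K^b$ at all) lies in $N$ matched into a pair of upper dimension $\dim\sigma+2$, so your decreasing-upper-dimension ordering does free $\sigma$; you should also add the one-line observation that once $\tau$ is the only surviving \emph{immediate} coface of $\sigma$ there can be no surviving coface of higher codimension, since the complex remains closed under faces throughout the collapse. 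Incidentally, the paper's own proof of Lemma \ref{barphicollapse} orders the removed pairs by decreasing $f$-value rather than by dimension; either ordering works here.
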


The next theorem explains how the topology of the sublevel complexes changes as one passes a critical value of a discrete Morse function. In what follows, $\dot{e}^{(p)}$ denotes the boundary of a $p$-simplex $e^{(p)}$. 

\begin{theorem}[DMT Part B,~\cite{forman}] 
\label{theorem:dmt-b}
Suppose $\sigma^{(p)}$ is a critical simplex with $f(\sigma) \in (a,b]$, and there are no other critical simplices with values in $(a,b]$. Then $K^b$ is homotopy equivalent to attaching a $p$-cell $e^{(p)}$ along its entire boundary in $K^a$; that is, $K^b= K^a \cup_{\dot{e}^{(p)}} e^{(p)}$. \hfill $\qed$
\end{theorem}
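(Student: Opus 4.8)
The plan is to isolate the single instant at which the sublevel complex crosses the critical value $c=f(\sigma)$, and to show that this crossing attaches exactly one $p$-cell along its full boundary. First I would use Lemma \ref{excellent} to replace $f$ by an equivalent injective discrete Morse function with the same critical simplices. Since Theorem \ref{theorem:dmt-a} guarantees that the homotopy type of $K^t$ is constant between consecutive critical values, I would also arrange that $a$ and $b$ are not critical values and that the perturbation is small enough to leave the homotopy types of $K^a$ and $K^b$ unchanged. After this reduction it suffices to treat an injective $f$ in which $\sigma$ is the unique simplex whose value lies in $(a,b]$ and is critical.

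Next I would examine the complex $K^{<c}$ of simplices of value strictly less than $c$. Because $\sigma$ is critical, $L(\sigma)=\emptyset$, so every codimension-one face $\gamma<\sigma$ satisfies $f(\gamma)<f(\sigma)=c$ and hence lies in $K^{<c}$; as $K^{<c}$ is closed under taking faces, the whole boundary $\dot\sigma$ lies in $K^{<c}$. Injectivity makes $\sigma$ the only simplex of value $c$, so that $K^c=K^{<c}\cup_{\dot\sigma}\sigma$ is literally the attachment of a single $p$-cell along its entire boundary $\dot\sigma\cong S^{p-1}$.

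Then I would transport this attachment down to $K^a$ and up to $K^b$ by two applications of Theorem \ref{theorem:dmt-a}. The interval below $c$ contains no critical value, so $K^{<c}\searrow K^a$ and the inclusion $K^a\hookrightarrow K^{<c}$ is a strong deformation retract, say with retraction $r$; the interval $(c,b]$ contains no critical value, so $K^b\searrow K^c$ and $K^b\simeq K^c$. Composing the attaching map $\dot\sigma\hookrightarrow K^{<c}$ with $r$ yields a map $\varphi\colon S^{p-1}\ra K^a$, and I would invoke the standard homotopy-invariance of cell attachments to conclude that $K^{<c}\cup_{\dot\sigma}\sigma$ deformation retracts onto $K^a\cup_\varphi e^{(p)}$. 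Chaining these equivalences gives $K^b\simeq K^c=K^{<c}\cup_{\dot\sigma}\sigma\simeq K^a\cup_{\dot e}e^{(p)}$.

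The hard part will be this last step. The collapse $K^{<c}\searrow K^a$ may well destroy faces of $\sigma$, so $\dot\sigma$ need not survive into $K^a$ and the map $\varphi$ is only continuous rather than simplicial; it is exactly here that the literal equality $K^c=K^{<c}\cup_{\dot\sigma}\sigma$ must be relaxed to a homotopy equivalence, and one must check that the attaching map is carried along compatibly by the deformation retraction. A smaller point needing care is the injective reduction of the first step, where the perturbation must be chosen so that no simplex is moved across the levels $a$ or $b$.
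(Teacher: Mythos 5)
Your proposal is correct and is essentially the standard proof of this theorem --- the paper itself gives no argument, quoting the result from \cite{forman}, and your route (reduce to an injective Morse function via Lemma \ref{excellent}, observe the literal attachment $K^c = K^{<c}\cup_{\dot\sigma}\sigma$ at the critical level, then transport it through the collapses supplied by Theorem \ref{theorem:dmt-a} using homotopy invariance of cell attachments) is exactly Forman's original Theorem 3.3 argument, including the two delicate points you flag. Two cosmetic remarks: the transported complex $K^a\cup_\varphi e^{(p)}$ is merely homotopy equivalent to $K^{<c}\cup_{\dot\sigma}\sigma$ rather than a deformation retract of it (the cell is attached along a different map, so it is not a subspace), and you should record why $\sigma\notin K^{<c}$, i.e.\ why $\sigma$ is not a face of any simplex of value below $c$; this follows from criticality ($U(\sigma)=\emptyset$) together with the Morse conditions, which rule out a coface of codimension at least two having smaller value.
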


\subsection*{The associated gradient vector field} Given a discrete Morse function $f:K\to\Rspace$ we may associate a discrete gradient vector field as follows. Since any noncritical simplex $\alpha^{(p)}$ has at most one of the sets $U(\alpha)$ and $L(\alpha)$ nonempty, there is a unique face $\nu^{(p-1)}<\alpha$ with $f(\nu)\ge f(\alpha)$ or a unique coface $\beta^{(p+1)}>\alpha$ with $f(\beta)\le f(\alpha)$. Denote by $V$ the collection of all such pairs $\{\sigma <\tau\}$ (referred to as \emph{Morse pairs}). Then every simplex in $K$ is in at most one pair in $V$ and the simplices not in any pair are precisely the critical cells of the function $f$. We call $V$ the {\em gradient vector field associated to $f$}.  We visualize $V$ by drawing an arrow from $\alpha$ to $\beta$ for every pair $\{\alpha<\beta\}\in V$. Observe that $V$ ``points" in the direction of {\em decrease} for $f$; as such it corresponds to the {\em negative} gradient of the function and we may write $V = -\nabla f$. Theorems \ref{theorem:dmt-a} and \ref{theorem:dmt-b} may then be visualized in terms of $V$ by collapsing the pairs in $V$ using the arrows. Thus a discrete gradient (or equivalently a discrete Morse function) provides a collapsing order for the complex $K$, simplifying it to a complex $L$ with potentially fewer cells but having the same homotopy type.

The collection $V$ has the following property. By a \emph{$V$-path}, we mean a sequence $$\alpha^{(p)}_0 < \beta_0^{(p+1)}>\alpha_1^{(p)}<\beta_1^{(p+1)}>\cdots <\beta_r^{(p+1)}>\alpha_{r+1}^{(p)}$$ where each $\{\alpha_i<\beta_i\}$ is a pair in $V$. Such a path is {\em nontrivial} if $r>0$ and {\em closed} if $\alpha_{r+1}=\alpha_0$. Forman proved the following result.

\begin{theorem}[\cite{forman}] If $V$ is a gradient vector field associated to a discrete Morse function $f$ on $K$, then $V$ has no nontrivial closed $V$-paths.
\end{theorem}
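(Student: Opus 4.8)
The plan is to show that $f$ strictly decreases as one traverses a $V$-path, so that no such path can close up nontrivially. Fix a $V$-path
$$\alpha^{(p)}_0 < \beta_0^{(p+1)}>\alpha_1^{(p)}<\beta_1^{(p+1)}>\cdots <\beta_r^{(p+1)}>\alpha_{r+1}^{(p)},$$
and analyze a single zig-zag step $\alpha_i < \beta_i > \alpha_{i+1}$. First, since $\{\alpha_i < \beta_i\}$ is a pair in $V$, the coface $\beta_i$ is the Morse exception attached to $\alpha_i$; equivalently $\beta_i \in U(\alpha_i)$, and by the definition of $U(\alpha_i)$ this means $f(\beta_i) \le f(\alpha_i)$. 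Thus each upward edge of the path does not increase $f$.

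The crucial point is to control the downward edge $\beta_i > \alpha_{i+1}$ using the Morse condition. Because $\{\alpha_i<\beta_i\}\in V$, the simplex $\alpha_i$ lies in $L(\beta_i)$, so $|L(\beta_i)|=1$ and $\alpha_i$ is the \emph{unique} face of $\beta_i$ satisfying $f(\alpha_i)\ge f(\beta_i)$. Since the definition of a $V$-path requires $\alpha_{i+1}\ne\alpha_i$, the face $\alpha_{i+1}$ cannot lie in $L(\beta_i)$, and therefore $f(\alpha_{i+1}) < f(\beta_i)$. Combining the two inequalities gives the strict descent
$$f(\alpha_{i+1}) < f(\beta_i) \le f(\alpha_i).$$

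Iterating over $i=0,\dots,r$ yields the chain $f(\alpha_0) > f(\alpha_1) > \cdots > f(\alpha_{r+1})$. If the path were nontrivial ($r>0$) and closed ($\alpha_{r+1}=\alpha_0$), this chain would force $f(\alpha_0) > f(\alpha_0)$, an impossibility; hence no nontrivial closed $V$-path exists. I expect the only delicate step to be the strict inequality $f(\alpha_{i+1}) < f(\beta_i)$, since it is here that one must invoke the Morse condition $|L(\beta_i)|\le 1$ together with the hypothesis $\alpha_{i+1}\ne\alpha_i$ built into the definition of a $V$-path, rather than merely the weak inequality supplied by membership in $V$. If one prefers to avoid tracking the weak-versus-strict distinction, one could first replace $f$ by the equivalent injective discrete Morse function $g$ furnished by Lemma \ref{excellent}, which induces the same gradient vector field and renders every inequality above strict automatically.
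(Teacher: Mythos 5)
Your proof is correct and is precisely the standard argument of Forman that the paper cites without reproducing: the weak inequality $f(\beta_i)\le f(\alpha_i)$ on each upward step, combined with the Morse condition $|L(\beta_i)|\le 1$ (with $L(\beta_i)=\{\alpha_i\}$) forcing the strict inequality $f(\alpha_{i+1})<f(\beta_i)$ on each downward step, yields strict descent $f(\alpha_0)>f(\alpha_1)>\cdots>f(\alpha_{r+1})$, which is incompatible with a nontrivial closed path. One small observation: the paper's definition of a $V$-path does not explicitly require $\alpha_{i+1}\ne\alpha_i$ (Forman's original definition does, and the theorem is false without it), so your appeal to that condition is the correct reading, and your care with the weak-versus-strict distinction is exactly where the content of the proof lies.
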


\subsection*{Ascending and descending regions} If $g:M\to\zr$ is a Morse function on a smooth compact manifold $M$ and if $p$ is a critical point of $g$, there are two subspaces associated to $p$: the ascending and descending discs. The former is the set of points $q$ in $M$ such that the integral curve of $-\nabla g$ passing through $q$ converges to $p$ as $t\to \infty$ while the latter is the set of points whose integral curves converge to $p$ as $t\to-\infty$. If $p$ has index $\lambda$, then the ascending disc is topologically a disc of dimension $n-\lambda$ and the descending disc has dimension $\lambda$. 

The discrete analogues of these objects were constructed by Jer\v{s}e and Mramor Kosta in \cite{jerse-kosta}. The idea is that descending discs should be unions of $V$-paths, just as in the smooth case (where descending discs are foliated by integral curves of a gradient-like vector field). Ascending discs are then constructed as descending discs for the dual vector field $V^\ast$ on the dual cell complex $K^\ast$. 

For our purposes, however, we need only consider ascending discs of local minima and for this it suffices to consider the {\em basin} of a minimum $v$, defined in \cite{robins}. This is the maximal subcomplex of $K$ that collapses to $v$; we denote it by $A(v)$.

\section{Min-max data associated to a discrete Morse function}\label{minmaxdata}

Let $K$ be a finite simplicial complex and let $f:K\to\zr$ be a discrete Morse function (which we may assume injective). Recall that for $a\in\zr$, $K^a$ denotes the sublevel complex associated to $a$. We denote by $L^a$ the {\em sublevel set} $L^a=\{\sigma\in K: f(\sigma)\le a\}$. Note that $K^a$ is the closure of $L^a$.

Denote by ${\mathcal P}(K)$ the power set of $K$. 

\begin{definition}\label{minmaxdef} A collection of {\em min-max data} for the discrete Morse function $f:K\to\zr$ is a pair $({\mathcal H},{\mathcal S})$ satisfying the following conditions.
\begin{enumerate}
\item ${\mathcal H}$ is a collection of maps ${\mathcal P}(K) \to {\mathcal P}(K)$ such that for every regular value $a$ of $f$ there exist $\varepsilon >0$ and $h\in {\mathcal H}$ such that $$h(L^{a+\varepsilon})\subset L^{a-\varepsilon}.$$

\item ${\mathcal S}$ is a collection of subsets of $K$ such that for all $h\in {\mathcal H}$ and all $S\in {\mathcal S}$, $h(S)\in {\mathcal S}$.
\end{enumerate}
\end{definition}

\begin{theorem}\label{minmaxthm} If $({\mathcal H},{\mathcal S})$ is a collection of min-max data for the discrete Morse function $f$ on $K$ then the number $$c = \min_{S\in{\mathcal S}}\max_{\sigma\in S} f(\sigma)$$ is a critical value for $f$.
\end{theorem}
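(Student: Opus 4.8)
The plan is to run the standard min-max contradiction argument, transported into the discrete setting, where the finiteness of $K$ replaces the compactness/analytic input of the smooth theory. First I would check that $c$ is well defined and is actually a value of $f$. Since $K$ is finite, ${\mathcal P}(K)$ is finite, so ${\mathcal S}$ is a finite collection of finite sets; hence the outer minimum and each inner maximum are attained. Fix $S_0\in{\mathcal S}$ realizing the minimum and $\sigma^\ast\in S_0$ realizing $\max_{\sigma\in S_0}f(\sigma)$, so that $c=f(\sigma^\ast)$. In particular $c$ is a value of $f$, so it is meaningful to ask whether it is a critical value.

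Next I would assume, for contradiction, that $c$ is \emph{not} a critical value. Because $f$ is injective (Lemma \ref{excellent}) and $c=f(\sigma^\ast)$, this forces $\sigma^\ast$ to be a regular simplex and $c$ to be a regular value. I can then apply condition (1) of Definition \ref{minmaxdef} to the regular value $c$, obtaining $\varepsilon>0$ and a map $h\in{\mathcal H}$ with $h(L^{c+\varepsilon})\subseteq L^{c-\varepsilon}$. The point is now to feed the minimizing set $S_0$ through $h$: by the choice of $S_0$ every $\sigma\in S_0$ satisfies $f(\sigma)\le c$, so $S_0\subseteq L^{c}\subseteq L^{c+\varepsilon}$. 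Granting that $h(S_0)\subseteq h(L^{c+\varepsilon})\subseteq L^{c-\varepsilon}$, condition (2) yields $h(S_0)\in{\mathcal S}$, while the inclusion $h(S_0)\subseteq L^{c-\varepsilon}$ gives $\max_{\sigma\in h(S_0)}f(\sigma)\le c-\varepsilon<c$. This contradicts the minimality of $c=\min_{S\in{\mathcal S}}\max_{\sigma\in S}f(\sigma)$, and so $c$ must be a critical value.

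The hard part will be justifying the single inclusion $h(S_0)\subseteq h(L^{c+\varepsilon})$, i.e. the monotonicity of $h$ with respect to set inclusion. Condition (1) only promises the decrease property for the full sublevel set $L^{c+\varepsilon}$, whereas the argument applies $h$ to the (possibly much smaller) set $S_0$, and there is no reason for $\mathcal S$ to contain $L^{c+\varepsilon}$ itself. This step is automatic whenever each $h\in{\mathcal H}$ arises as the set-image of a point map $K\to K$, since taking images always preserves inclusions; and the collapse/flow operators extracted from the gradient field $V$ are of exactly this form. I would therefore either record monotonicity as a standing requirement on the maps in ${\mathcal H}$, or verify it directly for the specific operators constructed in Section \ref{minmaxdata}. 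Once this is in place, the contradiction above closes the proof; note that all of the genuine work (producing a map that decreases sublevel sets across a regular value) has been pushed into the verification of condition (1), exactly as the deformation lemma plays that role in the smooth min-max principle.
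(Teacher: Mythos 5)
Your proposal is correct and follows essentially the same contradiction argument as the paper: assume $c$ is regular, invoke condition (1) of Definition \ref{minmaxdef} to get $\varepsilon>0$ and $h\in{\mathcal H}$ with $h(L^{c+\varepsilon})\subseteq L^{c-\varepsilon}$, push a (near-)minimizing $S\subseteq L^{c+\varepsilon}$ through $h$, and use condition (2) to contradict the definition of $c$. The monotonicity point you flag ($h(S)\subseteq h(L^{c+\varepsilon})$) is a genuine observation, but the paper's own proof makes the same silent leap---asserting $S'=h(S)\subseteq L^{c-\varepsilon}$ directly---and it is harmless for the applications, since the operators actually placed in ${\mathcal H}$, namely $\Phi(A)=\bigcup_{\sigma\in A}\phi(\sigma)$ and $\overline{\Phi}$, are built elementwise and hence preserve inclusions, exactly as you note.
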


\begin{proof} Suppose not; that is, assume that $c$ is a regular value for $f$. Then there exist $\varepsilon >0$ and $h\in {\mathcal H}$ such that $$h(L^{c+\varepsilon})\subset L^{c-\varepsilon}.$$ By the definition of $c$ there is an $S\in {\mathcal S}$ such that $$\max_{\sigma\in S} f(\sigma) < c+\varepsilon;$$ that is, $S\subset L^{c+\varepsilon}$. Then $S' = h(S) \in {\mathcal S}$ and $h(S)\subset L^{c-\varepsilon}$. It follows that $$\max_{\sigma\in S'} f(\sigma) \le c-\varepsilon,$$ which implies $$\min_{S'\in{\mathcal S}}\max_{\sigma\in S'} f(\sigma) \le c-\varepsilon,$$ contrary to the choice of $c$.
\end{proof}

\section{Mountain Pass Lemma}\label{sec:mtnpass}

Our goal in this section is to prove the Mountain Pass Lemma in the setting of an arbitrary simplicial complex. Recall the classical smooth situation. Given a Morse function $f:M\to\zr$  on the compact manifold $M$, suppose $f$ has two local minima, say at $x_0$ and $x_1$ with $f(x_0)<f(x_1)$. Assuming $M$ is connected, there is a path from $x_0$ to $x_1$. The Mountain Pass Lemma asserts that the infimum over all such paths $\gamma$ of the maximum value $\sup_t \{\gamma(t)\}$ is a critical value of $f$. 

The picture to have in mind is shown in Figure \ref{mountainpassfig}. Thinking of $f$ as a height function, the minima at $x_0$ and $x_1$ represent depressions in the landscape. To get from one low point to another, one must cross over a ridge; the lowest point $z$ on the ridge will be a saddle point; i.e. a critical point of $f$.

While this result may seem obvious, it is not at all clear how one could prove it on a general $n$-manifold, let alone on a general simplicial complex. Our plan is to construct an appropriate collection of min-max data and then apply Theorem \ref{minmaxthm}.

\begin{figure}
\centerline{\includegraphics[height=2.5in]{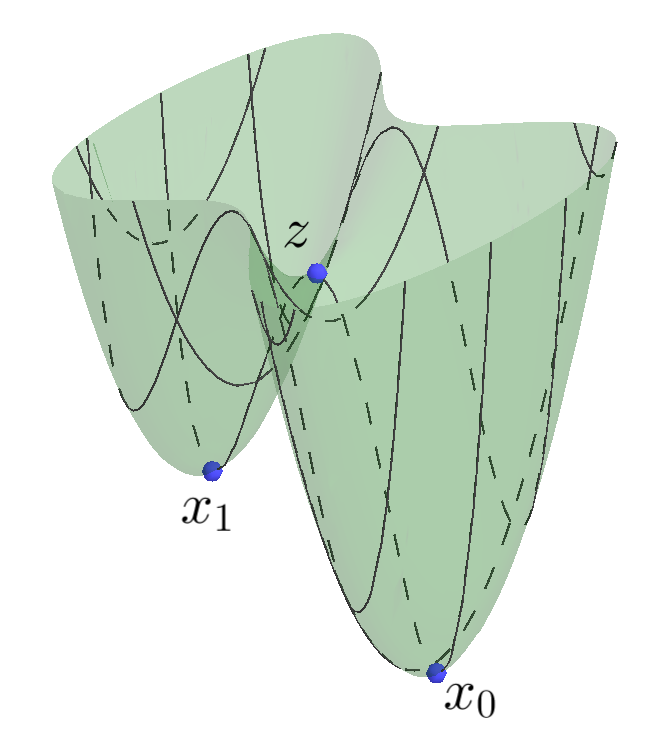}}
\caption{\label{mountainpassfig} Visualizing mountain pass points.}
\end{figure}

Suppose $V$ is a discrete gradient vector field on $K$; say $V = -\nabla f$ for some discrete Morse function $f$. Denote by $\langle - , - \rangle$ the inner product on the chains $C_p(K,\zz)$ defined by making all cells orthogonal and define a map $V: C_p(K,\mathbb{Z} )\rightarrow C_{p+1}(K,\mathbb{Z})$ as follows. Suppose $\sigma$ is a $p$-cell with fixed orientation. If there is a $(p+1)$-cell $\tau$ with $\{\sigma <\tau \}\in V$, then set $$V(\sigma )= -\langle\partial\tau , \sigma \rangle\tau,$$ where $\partial$ is the boundary map in $K$. If there is no such $\tau$, (for example, if $\sigma$ is critical), then set $V(\sigma ) = 0$. We extend this linearly to a map on all of $C_p(K)$.

\begin{definition} The {\em discrete flow operator} $\phi:C_p(K)\to C_p(K)$  is defined by $$\phi =\text{Id} + \partial V + V\partial.$$
\end{definition}

The intuition here is that if $v$ is a vertex then either (a) $v$ is critical and should be fixed by the flow, or (b) $v$ is not critical and it should flow to the other vertex of the edge $e$ is paired with by the gradient $V$; that is, $V(v) = \pm e = v + \partial V(v)$.

The following is Theorem 6.4 of \cite{forman}.

\begin{proposition}\label{flowgoesdown} Let $\sigma_1,\dots,\sigma_r$ denote the $p$-cells of $K$, each with a chosen orientation. Write $$\phi(\sigma_i) = \sum_j a_{ij}\sigma_j.$$ Then
\begin{enumerate}
\item For every $i$, $a_{ii}=0$ or $1$, and $a_{ii}=1$ if and only if $\sigma_i$ is critical.
\item If $i\ne j$, then $a_{ij}\in\zz$. If $i\ne j$ and $a_{ij}\ne 0$, then $f(\sigma_j)<f(\sigma_i)$. \hfill $\qed$
\end{enumerate}
\end{proposition}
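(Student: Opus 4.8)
The plan is to expand $\phi(\sigma_i)=\sigma_i+\partial V(\sigma_i)+V\partial(\sigma_i)$ and read off the coefficient $a_{ij}=\langle\phi(\sigma_i),\sigma_j\rangle$ one summand at a time. Throughout I would classify each $p$-cell as \emph{critical}, \emph{paired up} (there is a $(p+1)$-cell $\tau$ with $\{\sigma_i<\tau\}\in V$), or \emph{paired down} (there is a $(p-1)$-cell $\nu$ with $\{\nu<\sigma_i\}\in V$); by the Morse conditions these three cases are mutually exclusive and exhaustive. I would also record that on a regular cell complex every incidence number $\langle\partial\tau,\sigma\rangle$ lies in $\{0,\pm1\}$, so that $\langle\partial\tau,\sigma\rangle^2=1$ whenever $\sigma$ is a facet of $\tau$. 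Integrality of all the $a_{ij}$ is then immediate, since $\phi$ is assembled from $\partial$ and $V$ using only such coefficients, which settles the first assertion of part (2).

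For part (1) I would compute the diagonal coefficient $a_{ii}$ in each of the three cases. If $\sigma_i$ is paired up with $\tau$, then $V(\sigma_i)=-\langle\partial\tau,\sigma_i\rangle\tau$, so the coefficient of $\sigma_i$ in $\partial V(\sigma_i)$ is $-\langle\partial\tau,\sigma_i\rangle^2=-1$, while $V\partial(\sigma_i)$ contributes nothing on the diagonal because no facet of $\sigma_i$ is paired up with $\sigma_i$; hence $a_{ii}=1-1=0$. If $\sigma_i$ is paired down with $\nu$, then symmetrically $V(\sigma_i)=0$ kills the $\partial V$ term, while the summand $\langle\partial\sigma_i,\nu\rangle V(\nu)=-\langle\partial\sigma_i,\nu\rangle^2\sigma_i=-\sigma_i$ of $V\partial(\sigma_i)$ gives $a_{ii}=0$. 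Finally, if $\sigma_i$ is critical then $V(\sigma_i)=0$ and no facet of $\sigma_i$ is paired up with $\sigma_i$, so both correction terms vanish on the diagonal and $a_{ii}=1$. This shows $a_{ii}\in\{0,1\}$, with $a_{ii}=1$ exactly for critical cells.

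For the inequality in part (2) I would isolate the two mechanisms that can produce an off-diagonal $\sigma_j$. First, $\partial V(\sigma_i)$ is nonzero only when $\sigma_i$ is paired up with some $\tau$, and then $\partial V(\sigma_i)=-\langle\partial\tau,\sigma_i\rangle\partial\tau$ contributes the facets $\sigma_j$ of $\tau$. Here $\{\sigma_i<\tau\}\in V$ forces $\sigma_i\in L(\tau)$, and since $|L(\tau)|\le 1$ the cell $\sigma_i$ is the unique facet of $\tau$ with value $\ge f(\tau)$; every other facet $\sigma_j$ therefore satisfies $f(\sigma_j)<f(\tau)\le f(\sigma_i)$. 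Second, a summand $\sigma_j$ with $j\ne i$ arises in $V\partial(\sigma_i)$ exactly when some facet $\nu<\sigma_i$ is paired up with $\sigma_j$; then $\sigma_j\in U(\nu)$, and $|U(\nu)|\le 1$ gives $U(\nu)=\{\sigma_j\}$, so the coface $\sigma_i\ne\sigma_j$ is not in $U(\nu)$, i.e. $f(\sigma_i)>f(\nu)\ge f(\sigma_j)$. In both cases $f(\sigma_j)<f(\sigma_i)$, as required.

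The coefficient bookkeeping is routine; the step I expect to be the crux is recognizing that the two off-diagonal mechanisms are governed by the Morse conditions applied to \emph{different} cells — $|L(\tau)|\le 1$ for the $\partial V$ term and $|U(\nu)|\le 1$ for the $V\partial$ term — and that these are precisely what forces the strict decrease of $f$. Confirming that these are the only sources of off-diagonal terms, and that no additional cancellation occurs between the $\partial V$ and $V\partial$ contributions, is where I would be most careful.
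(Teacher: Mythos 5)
Your proof is correct. The paper does not prove this proposition itself---it quotes it as Theorem 6.4 of Forman's \emph{Morse theory for cell complexes}---and your three-case computation of the diagonal coefficient together with the two off-diagonal mechanisms (where $|L(\tau)|\le 1$ controls the $\partial V$ term and $|U(\nu)|\le 1$ controls the $V\partial$ term, each forcing strict decrease of $f$) is essentially Forman's original argument; your closing concern about cancellation between the two terms is harmless, since cancellation can only make a coefficient $a_{ij}$ vanish, and the claims are all conditional on $a_{ij}\ne 0$.
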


For our purposes, we will need to consider the collection of cells that appear in the image of $\phi$. Note that we may apply ${\phi}$ to sets of $p$-cells in $K$ to obtain a map, denoted ${\Phi}: {\mathcal P}(K)\to {\mathcal P}(K)$, defined by $$\Phi(A) = \bigcup_{\sigma\in A} \phi(\sigma).$$ That is, for a cell $\sigma$ we are using the notation $\phi(\sigma)$ to mean the collection of cells appearing in the chain $\phi(\sigma)$.

We also define a second map $\overline{\Phi}$ on ${\mathcal P}(K)$ by $$\overline{\Phi}(A) = \text{subcomplex of $K$ generated by $\Phi(A)$}.$$ We give examples of $\Phi$ and $\overline{\Phi}$ in Figure \ref{phiexamples}.

\begin{figure}
    \centering
    \includegraphics[width=1.65in]{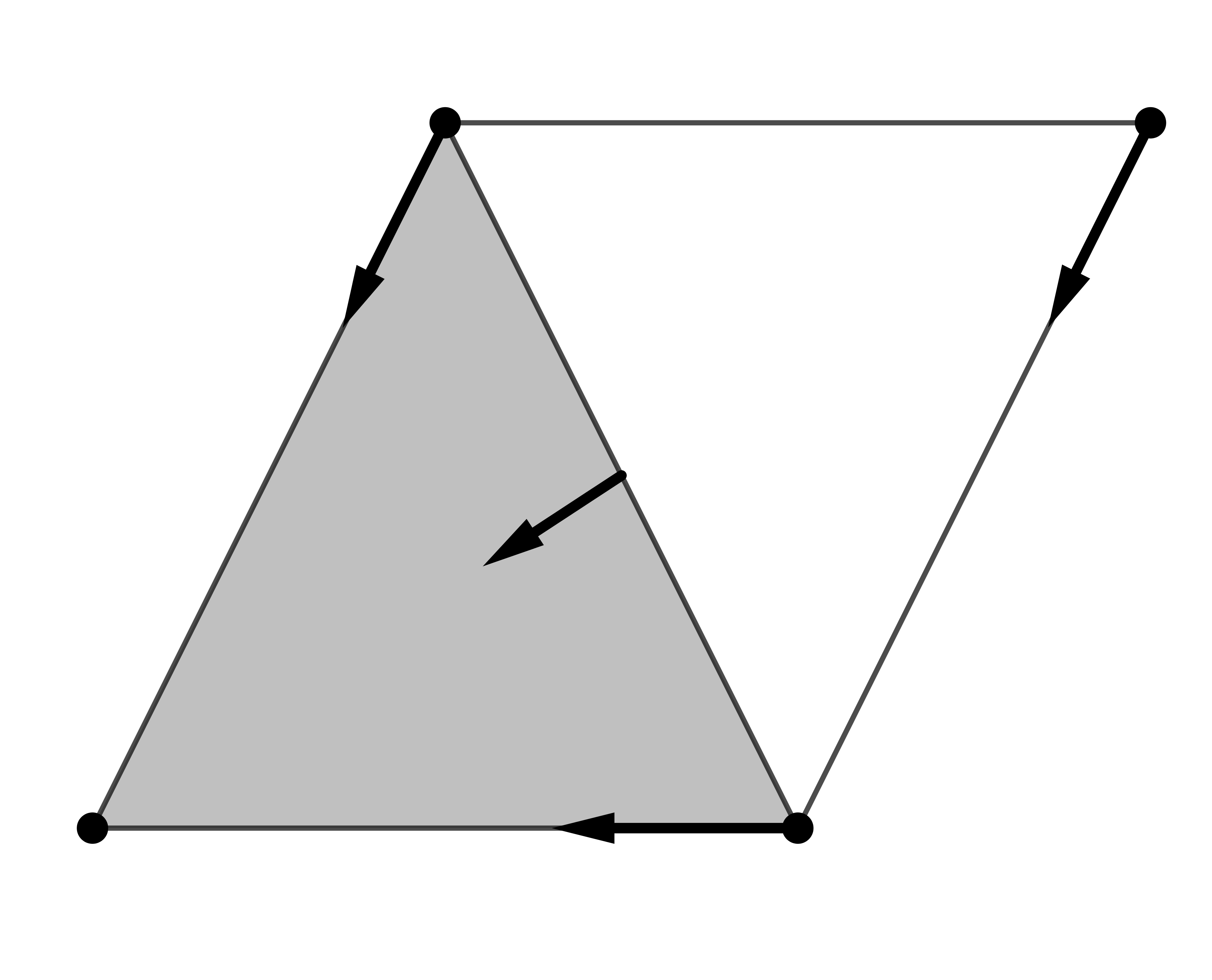}\includegraphics[width=1.65in]{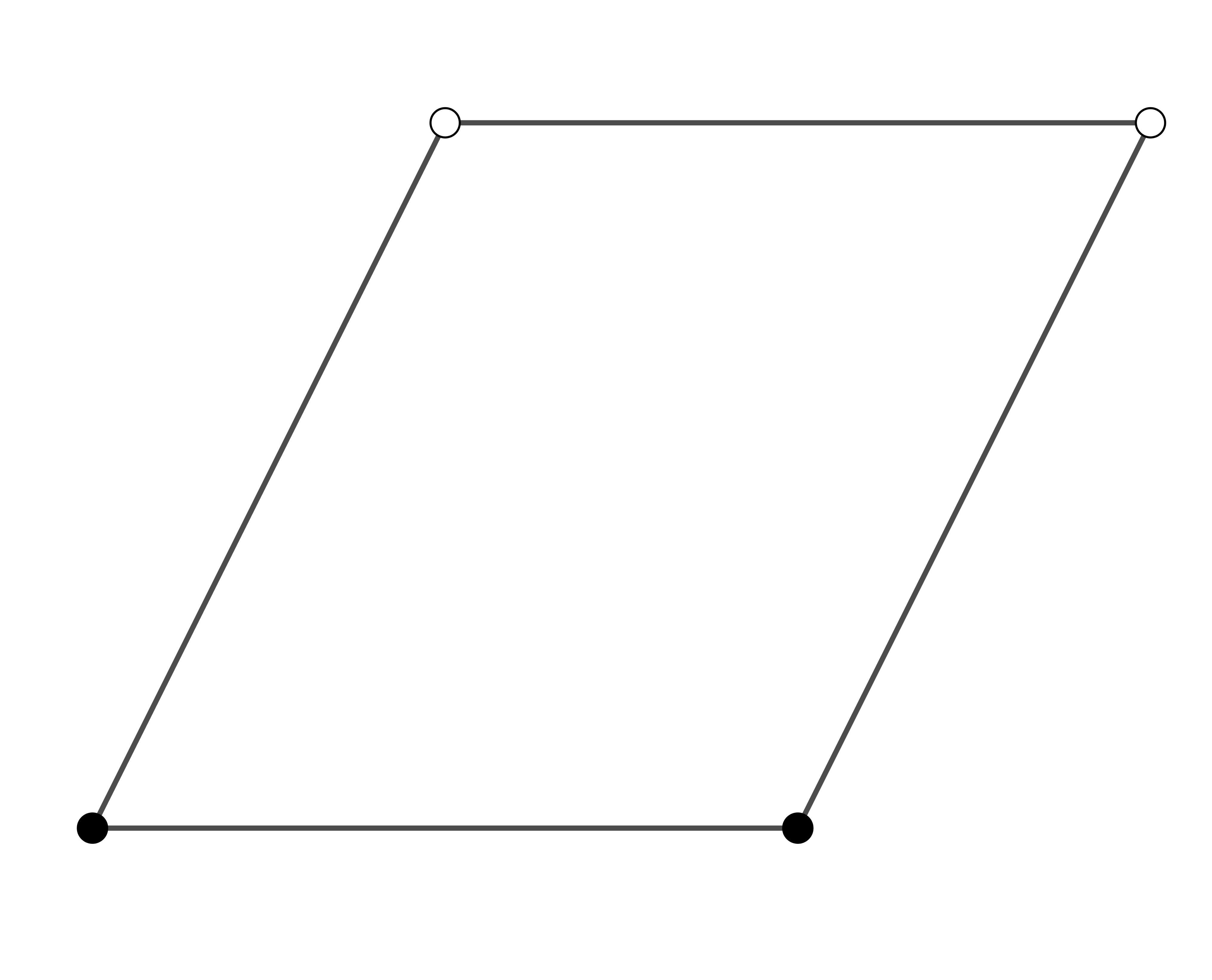}\includegraphics[width=1.65in]{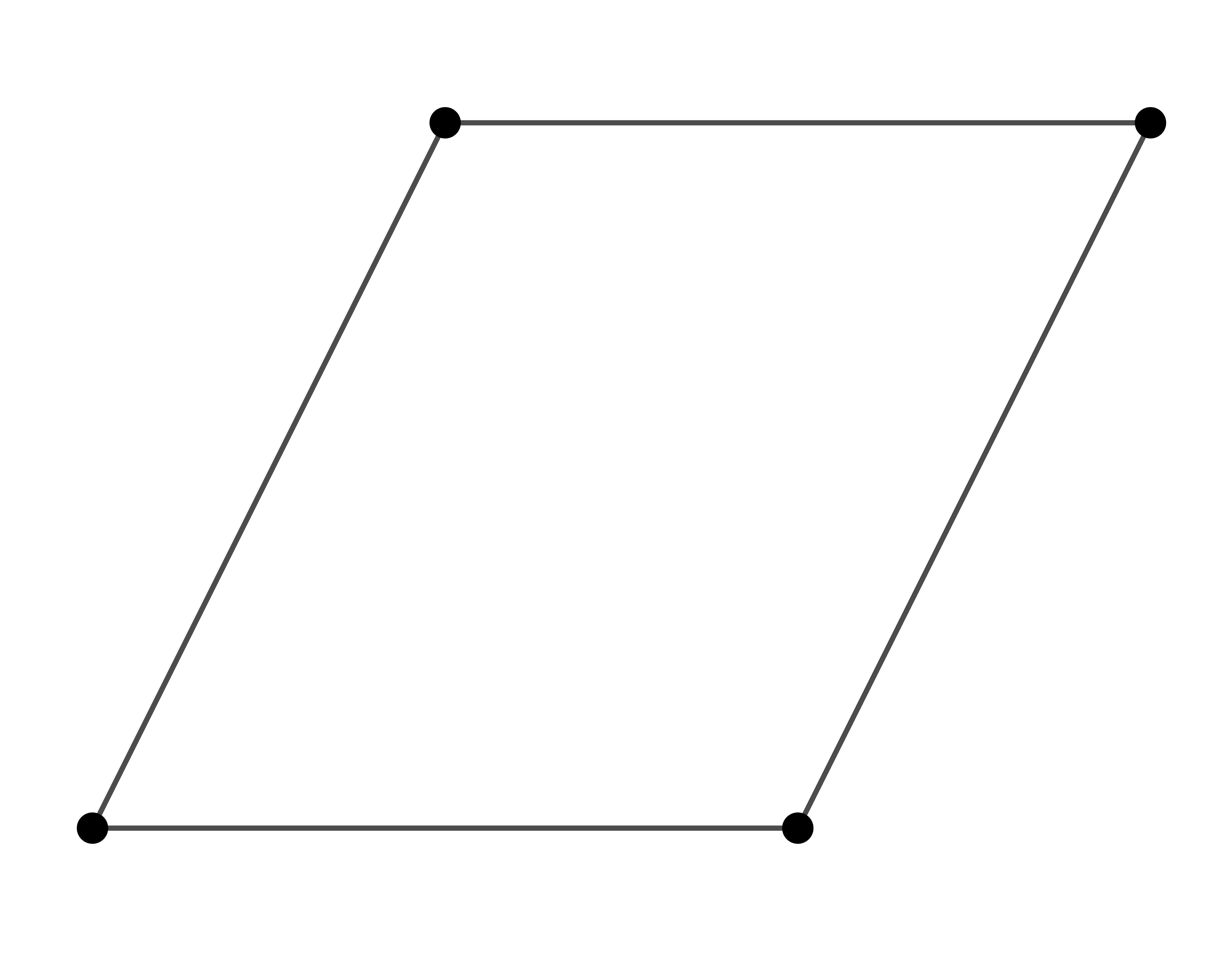}
    \caption{A complex $K$ (left), $\Phi(K)$ (center), $\overline{\Phi}(K)$ (right).}
    \label{phiexamples}
\end{figure}

The maps $\Phi$ and $\overline{\Phi}$ will be useful for constructing min-max data.  Consider $\mathcal{H} = \{ {\Phi} \}$. By Proposition \ref{flowgoesdown}, condition (1) in the definition of min-max data for a discrete Morse function is satisfied.  In fact, for any $a$, regular or not, ${\Phi}(K^a)\subseteq K^a$ (see, e.g. Lemma 1 of \cite{robins}), but ${\Phi}$ only decreases the sublevel set for regular values. The same is true for the map $\overline{\Phi}$. In fact we have the following, which is a special case of Theorem 4 of \cite{robins}.

\begin{lemma}\label{barphicollapse} For each $a\in\zr$, $K^a\searrow \overline{\Phi}(K^a)$.
\end{lemma}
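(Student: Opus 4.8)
The plan is to realize the collapse as the one induced by the gradient field $V$ on the cells that $\overline{\Phi}$ discards. Set $L=\overline{\Phi}(K^a)$. Since $\Phi(K^a)\subseteq K^a$ (Lemma 1 of \cite{robins}) and $K^a$ is itself a subcomplex, the subcomplex generated by $\Phi(K^a)$ is contained in $K^a$, so $L$ is a subcomplex of $K^a$. I would then invoke the standard principle of discrete Morse theory that a subcomplex $L\subseteq K^a$ satisfies $K^a\searrow L$ as soon as (A) $L$ is closed under the pairing $V$, meaning that whenever a cell lies in $L$ so does its $V$-partner, so that $K^a\setminus L$ is a disjoint union of Morse pairs containing no critical cell, and (B) the restriction of the matching $V$ to $K^a\setminus L$ is acyclic. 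Property (B) is free: a nontrivial closed $V$-path inside $K^a\setminus L$ would be one in $K$, and the theorem of Forman quoted above forbids these. So the entire proof reduces to establishing (A).

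For (A) the easy parts come first. Every critical cell $c\in K^a$ appears as a summand of $\phi(c)$ (Proposition \ref{flowgoesdown}(1)), hence $c\in\Phi(K^a)\subseteq L$; thus $K^a\setminus L$ contains no critical cell. For a Morse pair $\{\sigma^{(p)}<\tau^{(p+1)}\}$ with both cells in $K^a$, if the head $\tau$ lies in $L$ then so does the tail $\sigma$, because $L$ is a subcomplex and $\sigma<\tau$. The one remaining implication, which is the heart of the matter, is the converse:
\[
\sigma\in\overline{\Phi}(K^a)\ \Longrightarrow\ \tau\in\overline{\Phi}(K^a).
\]

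To prove this I would trace how the tail $\sigma$ enters $\overline{\Phi}(K^a)$ and exhibit an explicit witness for the head $\tau$. Since $\sigma$ is a regular tail, its diagonal coefficient in $\phi(\sigma)$ vanishes (Proposition \ref{flowgoesdown}(1)), so $\sigma$ never appears in its own image; if $\sigma$ appears in $\phi(\mu)$ for some $\mu\in K^a$ it can only do so through the term $\partial V\mu$, which forces $\mu$ to be a tail whose partner $\eta^{(p+1)}$ is a common coface of $\mu$ and $\sigma$, so that $\mu<\eta>\sigma$ is a step of a $V$-path. The cell $\eta$ is a head, hence $V\eta=0$ and $\phi(\eta)=\eta+V\partial\eta$; as $\sigma<\eta$ and $V\sigma=-\langle\partial\tau,\sigma\rangle\tau$, the coface $\tau$ occurs in $V\partial\eta$ with coefficient $-\langle\partial\eta,\sigma\rangle\langle\partial\tau,\sigma\rangle\neq 0$, and because $\sigma$ is the only cell paired with $\tau$ no other face of $\eta$ contributes a $\tau$ to cancel it. Since $\eta\in K^a$ (its value lies below that of $\mu\in K^a$), we conclude $\tau\in\phi(\eta)\subseteq\Phi(K^a)\subseteq\overline{\Phi}(K^a)$.

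I expect this last step to be the main obstacle, and two points demand care. First, one must exclude coefficient cancellation inside $\phi(\eta)$; this is exactly where it matters that $\eta$ be a head, so that $V\eta=0$ removes the $\partial V\eta$ term, and that $V$ be a matching, so that $\tau$ is produced only from $\sigma$. Second, $\sigma$ may enter $\overline{\Phi}(K^a)$ not by belonging to $\Phi(K^a)$ itself but merely as a proper face of some higher-dimensional retained cell, and this case has to be reduced to the one above by descending along the generating cell to find a witnessing head $\eta\supset\sigma$ lying in $K^a$; the acyclicity of $V$ is what ensures this descent terminates. The same acyclicity, via (B), is also what lets the resulting complement of Morse pairs be stripped away one elementary collapse at a time, in order of decreasing $f$-value.
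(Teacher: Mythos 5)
Your overall architecture is the same as the paper's: retain the critical cells and their faces, observe that the complement of $\overline{\Phi}(K^a)$ in $K^a$ then consists of regular simplices, and strip off the corresponding Morse pairs one free pair at a time (the paper does this by ordering the pairs by decreasing $f$-value rather than by appealing to acyclicity of $V$, but these are equivalent mechanisms). Your easy steps are correct: critical cells are retained by Proposition \ref{flowgoesdown}(1), and a head in $\overline{\Phi}(K^a)$ forces its tail in, since $\overline{\Phi}(K^a)$ is a subcomplex. Your Case 1 of the converse is also sound: if the tail $\sigma$ itself lies in $\Phi(K^a)$, it can only have arrived through $\partial V\mu$ for a tail $\mu$ with head $\eta=V\mu>\sigma$, and then $\tau$ appears in $\phi(\eta)=\eta+V\partial\eta$ with coefficient $-\langle\partial\eta,\sigma\rangle\langle\partial\tau,\sigma\rangle\neq 0$, uncancelled because $V$ is a matching and $V\eta=0$. (One small repair: $\mu\in K^a$ does not immediately give $f(\mu)\le a$, since $K^a$ also contains faces of low-valued cells, so ``$\eta\in K^a$'' needs the standard codimension argument rather than just $f(\eta)\le f(\mu)$.)

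The genuine gap is exactly at the step you yourself call the heart of the matter: the case where $\sigma$ lies in $\overline{\Phi}(K^a)\setminus\Phi(K^a)$, i.e.\ enters only as a proper face of a retained cell. Here your proposed fix --- ``descend along the generating cell to find a witnessing head $\eta\supset\sigma$, with acyclicity of $V$ ensuring termination'' --- is not an argument, and it is aimed in the wrong direction: in this case the partner $\tau$ need not lie in $\Phi(K^a)$ at all, so no cell $\eta$ with $\tau\in\phi(\eta)$ need exist. Concretely, if $\sigma$ is a vertex of a retained edge $\rho$ and the edges $\lambda\ni\sigma$ are tails whose heads are $2$-simplices containing $\tau$, the $V(\sigma)$ contribution to $\phi(\lambda)$ can be cancelled by the $\partial V\lambda$ term --- this is precisely the cancellation phenomenon of case (3) in the proof of Theorem \ref{condition2mtnpass} --- and one then finds that $\tau$ enters $\overline{\Phi}(K^a)$ only as a face of a retained $2$-cell (for instance $\xi=V\rho$, which appears uncancelled in $\phi(\eta)$ for a $2$-dimensional head $\eta>\rho$). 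So the witness can live one dimension \emph{up}, and it is the subcomplex closure in $\overline{\Phi}$, not a descent controlled by acyclicity, that captures $\tau$; any self-contained proof of your property (A) must be an induction that allows the witnessing cell to rise in dimension and to be detected only after closure. The paper avoids this bookkeeping entirely: its short proof goes directly to the decreasing-value collapsing order and exhibits the maximal remaining pair as a free pair, deferring the general closure statement to Theorem 4 of \cite{robins}, of which the lemma is a special case.
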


\begin{proof}
Note that any critical simplex in $K^a$ is also in $\overline{\Phi}(K^a)$, and hence so are its faces. So a simplex $\sigma$ in $K^a\setminus\overline{\Phi}(K^a)$ must be regular and paired with some $\tau\in K^a$. Order such pairs by decreasing $f$-value; the maximal pair must be of the form $\{\alpha<\beta\}$ where $\alpha$ is a free face of $\beta$ (note that if $a$ is a regular value then this pair consists of the unique simplex in $f^{-1}(a)$ and either a coface of lower value or face of higher value). Removing the pairs in decreasing order yields a collapsing sequence $K^a\searrow \overline{\Phi}(K^a)$.
\end{proof}

Now suppose that $v_0$ and $v_1$ are critical vertices for the discrete Morse function $f$ and assume $f(v_0)<f(v_1)$. Let $\mathcal{S}$ be the collection of subsets of $K$ consisting of sets of the form $E=\{v_1,e_1,e_2,\dots,e_r\}$, where the $e_i$ form an edge path beginning at $v_1$ and ending in the basin $A(v_0)$. Writing $e_i=<u_{i-1},u_i>$, we insist on the following condition: if $u_j$ belongs to the ascending disc of $v_0$ then $f(e_j)>f(e_{j+1})>\cdots >f(e_r)$. This condition is not strictly necessary, but it does prevent path bifurcation when acted upon by ${\Phi}$.  We may assume that such paths do not pass through another critical vertex $v\ne v_0,v_1$.

Note that if $E\in {\mathcal S}$, ${\Phi}(E)\ne\emptyset$. Indeed, since $v_1$ is a critical vertex, ${\Phi}(v_1)=v_1$.

\begin{theorem}\label{condition2mtnpass} If $E\in\mathcal{S}$, then ${\Phi} (E) \in \mathcal{S}$. 
\end{theorem}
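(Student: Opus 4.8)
The plan is to compute the action of $\Phi$ one cell at a time and then show that the resulting cells reassemble into a single edge path of the required type. Since $\Phi(E)=\phi(v_1)\cup\bigcup_{i=1}^r\phi(e_i)$ and $v_1$ is critical, we have $\phi(v_1)=v_1$, so the new set again contains the basepoint $v_1$; it remains to understand the supports $\phi(e_i)$ and to check that they concatenate correctly. Writing $e_i=\langle u_{i-1},u_i\rangle$, the definition $\phi=\mathrm{Id}+\partial V+V\partial$ gives
\[
\phi(e_i)=e_i+\partial V(e_i)+V(u_i)-V(u_{i-1}),
\]
so the first step is to read off this support by classifying $e_i$ (critical, paired with a triangle, or having a vertex paired up into it) together with the status of its endpoints (critical, or paired with an edge along which they ``slide'').

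First I would carry out this local bookkeeping. A critical vertex is fixed by the flow, while a regular vertex $u_j$ slides across its paired edge to the opposite vertex; an edge paired with a triangle $t_i$ has its own term cancel (Proposition \ref{flowgoesdown}(1)) and is replaced by the other two edges of $t_i$. In every case the support of $\phi(e_i)$ is a short edge path whose two ends are the flowed images of $u_{i-1}$ and $u_i$, and by Proposition \ref{flowgoesdown}(2) every edge occurring in $\phi(e_i)$ other than a retained critical $e_i$ has strictly smaller $f$-value than $e_i$. Because the consecutive edges $e_i$ and $e_{i+1}$ share the vertex $u_i$, the terms $+V(u_i)$ and $-V(u_i)$ appearing respectively in $\phi(e_i)$ and $\phi(e_{i+1})$ (or the common vertex $u_i$ itself, when it is critical) guarantee that $\phi(e_i)$ and $\phi(e_{i+1})$ meet; concatenating over $i$ then produces a connected edge path starting at $\phi(v_1)=v_1$ and ending at the flowed image of $u_r$.

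It remains to check the two structural requirements in the definition of $\mathcal{S}$. Since $u_r$ lies in $A(v_0)$ and the basin is invariant under the flow --- every $V$-path issuing from a cell of $A(v_0)$ remains in $A(v_0)$ and runs toward $v_0$, which is built into the definition of $A(v_0)$ as the maximal subcomplex collapsing to $v_0$ (compare Lemma \ref{barphicollapse}) --- the new endpoint again lies in $A(v_0)$. The descending condition is then inherited: inside the ascending disc of $v_0$ the original edges strictly decrease in $f$-value, and since $\phi$ never raises $f$-values (Proposition \ref{flowgoesdown}(2)) the flowed edges continue to strictly decrease. The main obstacle is the remaining requirement that $\Phi(E)$ be an honest, non-branching path: an edge paired with a triangle flows onto two edges, and at a regular vertex lying between two such edges the flowed images could in principle create a vertex of degree three and split the path. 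Ruling this out is exactly the role of the descending hypothesis imposed in the definition of $\mathcal{S}$, which orders the relevant edge values so that the flow continues along a single strand; verifying this no-bifurcation property in each local configuration is the crux of the argument.
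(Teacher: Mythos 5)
Your overall reduction---compute $\phi$ cell by cell, note $\phi(v_1)=v_1$, and argue the supports reassemble into a path ending in $A(v_0)$---matches the paper's setup, but your proposal stops exactly where the actual proof begins. You explicitly defer ``verifying this no-bifurcation property in each local configuration,'' and that verification is not a routine check one can wave at: it is the entire content of the theorem. Indeed, your uniform claim that ``in every case the support of $\phi(e_i)$ is a short edge path whose two ends are the flowed images of $u_{i-1}$ and $u_i$'' is false as stated: when $e_i$ is paired with one of its own endpoints (case (2) of the local computation), $\phi(e_i)=V(w)$ is a single edge, or even $0$ when $w$ is critical, so the support can contract or vanish and the concatenation argument breaks down---the pieces need not meet at all. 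Likewise, deducing the descending condition from Proposition \ref{flowgoesdown}(2) only gives $f$-values in $\phi(e_i)$ smaller than $f(e_i)$; it does not by itself order the new edges monotonically along the new tail.

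The paper closes these gaps by inducting on $r$, the number of edges, which lets it avoid confronting all local configurations at once. In the inductive step it splits on whether the truncated path $\{v_1,e_1,\dots,e_{r-1}\}$ already ends in $A(v_0)$: if so, the inductive hypothesis handles it and $\phi(e_r)$ stays in the basin; if not, the key observation is that $u_{r-1}$ \emph{cannot} be paired with $e_r$ (otherwise $f(u_r)<f(e_r)$ would force $u_{r-1}\in A(v_0)$), so the problematic case (2) is ruled out for the last edge: $e_r$ is either critical, giving $\phi(e_r)=e_r+V(u_{r-1})+V(u_r)$, or paired with a $2$-simplex, giving $\phi(e_r)=e'+e''+V(u_r)+V(u_{r-1})$, which merely diverts the path around the triangle. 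The base case $r=1$ similarly uses that $v_1$ is critical, and handles the contraction $V(u)=-<u,w>$ explicitly as a shortened path still in $\mathcal{S}$. To repair your proposal you would need to supply precisely this dichotomy and the pairing obstruction for $u_{r-1}$; without it, your argument has named the crux but not proved it.
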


Assuming this we see that $(\mathcal{H},\mathcal{S})$ is a collection of min-max data on $K$. By Theorem \ref{minmaxthm} we see that 
$$c = \min_{E\in {\mathcal S}}\max_{e\in E} f(e)$$ is a critical value of $f$. This critical value must be greater than $f(v_1)$ since any set $E$ in $\mathcal S$ has the form $E=\{v_1,e_1,\dots,e_r\}$ with $r\ge 1$ and $f(e_1)>f(v_1)$. Thus, among all the edge paths from $v_1$ to $v_0$, there is one passing through a critical edge $e$ and the value of $f(e)$ is minimal among all such paths. Thus, $e$ may be thought of as the ``saddle point" along the ``ridge" between $v_0$ and $v_1$.

We have thus proved the discrete version of the Mountain-Pass Lemma and we refer to the critical edges that result as mountain-pass edges.  The Mountain-Pass Lemma implies that if a discrete Morse function has two vertices that are strict local minima, then it must admit a critical edge.

Another explanation of why there should exist a critical edge with function value greater than $f(v_1)$ is to observe that the sublevel complex $K^{f(v_1)}$ is disconnected while the cell complex $K$ is connected.  The change in topological type in going from $K^{f(v_1)}$ to $K$ can be explained by the presence of a critical edge with function value greater than $f(v_1)$.

\begin{remark}
A slightly different version of this result was proved in \cite{robins}; the authors did not employ min-max theory to do so. 
\end{remark}

\subsubsection*{Proof of Theorem \ref{condition2mtnpass}} We begin by computing the action of ${\Phi}$ on a single edge $e=<v,w>$. There are three cases.
\begin{enumerate}
\item Suppose $e$ is critical. Then $\phi(e) = e+V\partial e = e-V(v)+V(w)$. It follows that ${\Phi}(e) = \{e,V(v),V(w)\}$.
\item If $e$ is regular and paired with one of its vertices (say $v$), then $\phi(e) = e+V\partial e = V(w)$. Thus $\Phi(e) = \{V(w)\}$.
\item If $e$ is regular and paired with the 2-simplex with edges $e,e',e''$, then $\phi(e) = e+V\partial e+\partial V(e)$. Several things could happen in this case depending on how the vertices $v$ and $w$ are paired (or not) by $V$. We will evaluate this scenario as needed in what follows. 
\end{enumerate}

We proceed by induction on $r$, the number of edges in a set $E$. The case $r=1$ is essentially described above. Here we have $E=\{v_1,e\}$ for some edge $e$. The other vertex $u$ of $e$ lies in the ascending disc of $v_0$. If $e$ is critical we then have $\phi(e) = e + V(u)$ and so ${\Phi}(E) = \{v_1,e,V(u)\}$ is another path ending in $A(v_0)$. Since $u$ is in $A(v_0)$ it cannot be paired with $e$ and so the second case above cannot occur. This leaves the final possibility that $e$ is regular and paired with a 2-simplex $<v_1,u,w>$. In this case we see that $\phi(e) = <v_1,w>+<u,w> + V(u)$. It could be that $V(u) = -<u,w>$, which implies that $w$ also belongs to $A(v_0)$ and hence ${\Phi}(E) = \{v_1,<v_1,w>\}$ is another element of $\mathcal S$. Otherwise, $V(u)$ is another edge ending in $A(v_0)$ and so ${\Phi}(E) = \{v_1,<v_1,w>,<u,w>,V(u)\}\in {\mathcal S}$.

Now assume the result holds for all paths of length $< r$ and suppose $E=\{v_1,e_1,\dots,e_r\}\in {\mathcal S}$. If $\{v_1,e_1,\dots,e_{r-1}\}$ ends in $A(v_0)$, then $\phi(e_r)$ also lands in $A(v_0)$ and by the induction hypothesis, $\Phi(E)\in {\mathcal S}$. Now suppose that $\{v_1,e_1,\dots,e_{r-1}\}$ does not end in $A(v_0)$. This means that there is no gradient path $$u_{r-1}<\epsilon_0>w_1<\epsilon_1>\cdots<\epsilon_s>v_0.$$ However, there is a path $$\eta_0>u_r<\eta_1>\nu_1<\eta_2>\cdots<\eta_t>v_0.$$ It follows that $e_r\ne\eta_0$ and $f(e_r)>f(u_r)$. Note that this implies that $u_{r-1}$ could not be paired with $e_r$ (else $f(u_r)<f(e_r)$ which would imply that $u_{r-1}\in A(v_0)$). Thus, $e_r$ is paired with a $2$-simplex $\sigma$ (whose faces are $e_r$, $e'$, and $e''$), or $e_r$ is critical. In the first case, $\phi(e_r)=e'+e''+V(u_r)+V(u_{r-1})$ simply diverts the path $E$ around the edges of $\sigma$ and so ${\Phi}(E)\in{\mathcal S}$. If $e_r$ is critical then $\phi(e_r)=e_r+V(u_{r-1})+V(u_r)$ and so ${\Phi}(E)$ still lies in ${\mathcal S}$. This completes the proof. \hfill $\qed$

\begin{remark}
The reader might wonder why we used the map $\Phi$ and edge paths that do not include the vertices as opposed to the map $\overline{\Phi}$ and $1$-dimensional subcomplexes. The issue is that arbitrary edge paths emanating from $v_1$ might contain vertices that are paired with edges not in the path. Applying $\Phi$ or $\overline{\Phi}$ to such an object leads to subcomplexes that have extraneous edges jutting from the path. We would then be forced to expand the collection ${\mathcal S}$ to accommodate this, causing some technical difficulties.
\end{remark}

\section{A discrete Lusternik-Schnirelmann theorem}\label{lscat}
Suppose $K$ is a simplicial complex and that $L$ is a subcomplex. Recall that $L$ is {\em collapsible} if it collapses to a vertex. The following definition appears in \cite{scoville}.

\begin{definition} Suppose that $L$ is a subcomplex of $K$. Then $L$ has {\em geometric precategory less than or equal to $m$ in $K$}, denoted $\widetilde{\text{dgcat}}_K(L)\le m$ if there exist $m+1$ subcomplexes $U_0,\dots, U_m$ in $K$, each of which is collapsible in $K$, such that $L\subseteq\bigcup_{i=0}^m U_i$. If $\widetilde{\text{dgcat}}_K(L)\not< m$ we say $\widetilde{\text{dgcat}}_K(L) = m$. The {\em discrete {\em (}geometric{\em )} category of $L$ in $K$} is then $\text{dgcat}_K(L) = \min\{\widetilde{\text{dgcat}}_K(L'): L\searrow L'\}$.
\end{definition}

For each $k$, let $$\Gamma_k=\{L\subset K: \exists K^a\searrow L\;\text{with}\; \text{dgcat}_K(K^a)\ge k-1\}.$$ Recall that if $A\in{\mathcal P}(K)$, we define $\overline{\Phi}(A)$ to be the subcomplex of $K$ generated by $\Phi(A)$.

\begin{theorem} For each $k$, $(\{\overline{\Phi}\},\Gamma_k)$ is a collection of min-max data.
\end{theorem}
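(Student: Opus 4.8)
The plan is to verify the two conditions in Definition~\ref{minmaxdef} for the pair $(\{\overline{\Phi}\},\Gamma_k)$. Since $\mathcal{H}$ consists of the single map $\overline{\Phi}$, condition~(1) is already handled by the discussion preceding Lemma~\ref{barphicollapse}: for a regular value $a$, the map $\Phi$ (and hence $\overline{\Phi}$) strictly decreases the sublevel set, so there is an $\varepsilon>0$ with $\overline{\Phi}(L^{a+\varepsilon})\subset L^{a-\varepsilon}$. The real work is condition~(2): I must show that $\overline{\Phi}$ maps $\Gamma_k$ into itself, i.e. that if $L\in\Gamma_k$ then $\overline{\Phi}(L)\in\Gamma_k$.

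First I would unwind membership in $\Gamma_k$. An element $L\in\Gamma_k$ is a subcomplex admitting a collapse $K^a\searrow L$ from some sublevel complex with $\text{dgcat}_K(K^a)\ge k-1$. To exhibit $\overline{\Phi}(L)\in\Gamma_k$ I would produce the \emph{same} $K^a$ as a witness and establish a collapse $K^a\searrow\overline{\Phi}(L)$; the categorical inequality $\text{dgcat}_K(K^a)\ge k-1$ is then reused verbatim, so that side condition costs nothing. Thus everything reduces to the collapsibility claim $K^a\searrow\overline{\Phi}(L)$.

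The key step is to factor this collapse through $L$ itself. By hypothesis $K^a\searrow L$, so it suffices to show $L\searrow\overline{\Phi}(L)$. Here I would mimic the proof of Lemma~\ref{barphicollapse}, which shows $K^a\searrow\overline{\Phi}(K^a)$, but carried out inside the subcomplex $L$ rather than inside $K^a$. The essential observations are that $\overline{\Phi}(L)\subseteq L$ (the flow does not leave the subcomplex it started in, by Proposition~\ref{flowgoesdown}), that every critical simplex of $L$ survives into $\overline{\Phi}(L)$ together with all its faces, and that any simplex in $L\setminus\overline{\Phi}(L)$ is regular and paired with a partner simplex also lying in $L$. One then orders the surviving Morse pairs by decreasing $f$-value and removes them in that order, each removal being a legitimate elementary collapse because at each stage the higher simplex of the pair has a free face. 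This yields $L\searrow\overline{\Phi}(L)$, and composing with $K^a\searrow L$ gives $K^a\searrow\overline{\Phi}(L)$, completing the verification.

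The main obstacle I anticipate is the freeness of faces in the collapsing order: I must confirm that when the pairs of $L\setminus\overline{\Phi}(L)$ are stripped off in decreasing $f$-order, the lower simplex of each pair is genuinely a free face \emph{within $L$} at the moment of removal, not merely within $K^a$. Passing to the subcomplex $L$ could in principle leave a face with an unpaired coface that blocks the collapse; the argument of Lemma~\ref{barphicollapse} must be checked to ensure it localizes correctly to $L$. I expect this to go through because the gradient pairing restricted to $L$ still has no nontrivial closed $V$-paths and $\overline{\Phi}(L)$ contains exactly the critical cells of $L$ and their closures, but this is the point requiring genuine care rather than a routine citation.
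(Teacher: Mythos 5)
Your verification of condition (1) matches the paper, but your treatment of condition (2) has a genuine gap, and it sits exactly where you placed your bet. The claim that $\overline{\Phi}(L)\subseteq L$ ``by Proposition \ref{flowgoesdown}'' is false in general: that proposition only says the flow decreases $f$-values, which confines $\Phi$ to sublevel complexes $K^a$ but not to arbitrary subcomplexes. Membership in $\Gamma_k$ only requires \emph{some} collapse $K^a\searrow L$, and elementary collapses need not respect the gradient $V$; a regular simplex can survive into $L$ while its $V$-partner is removed, and then $\phi$ pushes it out of $L$. Concretely, let $K$ be the boundary of a triangle on vertices $u,v,w$ with $f(u)=0$, $f(uv)=1$, $f(v)=2$, $f(vw)=3$, $f(w)=4$, $f(uw)=5$, so that $V=\{\{v<uv\},\{w<vw\}\}$ and the critical cells are $u$ and $uw$. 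Then $K^4$ is the path $u$--$v$--$w$, and the elementary collapse removing the (non-gradient) free pair $\{u,uv\}$ gives $L=\{v,w,vw\}\in\Gamma_1$. But $\phi(v)=v+\partial V(v)=u$, and one computes $\overline{\Phi}(L)=\{u,v,uv\}$, which is not even contained in $L$; in particular $L\not\searrow\overline{\Phi}(L)$, so your factorization $K^a\searrow L\searrow\overline{\Phi}(L)$ cannot be repaired by a more careful ordering of pairs --- the containment itself breaks before any question of free faces arises.

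The paper factors the other way around: from $K^a\searrow L$ it deduces $\overline{\Phi}(K^a)\searrow\overline{\Phi}(L)$, and combines this with Lemma \ref{barphicollapse} to get $K^a\searrow\overline{\Phi}(K^a)\searrow\overline{\Phi}(L)$, so the same $a$ witnesses $\overline{\Phi}(L)\in\Gamma_k$. That is, $\overline{\Phi}$ is applied to \emph{both} sides of the given collapse, and the only complex collapsed ``by hand'' via the gradient-ordering argument is the sublevel complex $K^a$, where Proposition \ref{flowgoesdown} genuinely applies. In the example above this works: $\overline{\Phi}(K^4)=\{u,v,uv\}=\overline{\Phi}(L)$, and indeed $\overline{\Phi}(L)\in\Gamma_1$ (it equals $K^2$), consistent with the theorem even though your intermediate step fails. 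If you want to salvage your route, you would have to restrict to gradient-compatible collapses or re-derive the paper's step $\overline{\Phi}(K^a)\searrow\overline{\Phi}(L)$, which is the actual content needed.
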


\begin{proof}
Suppose $L\in\Gamma_k$. Then there is some $a\in\zr$ with $K^a\searrow L$. Note that $\overline{\Phi}(K^a)\searrow \overline{\Phi}(L)$ and since Lemma \ref{barphicollapse} implies $K^a\searrow \overline{\Phi}(K^a)$  we see that $\overline{\Phi}(L)\in\Gamma_k$.
\end{proof}

\begin{cor}\label{minmaxcor}
For each $k$ the real number $\displaystyle c_k=\min_{L\in\Gamma_k}\max_{\sigma\in L}f(\sigma)$ is a critical value of $f$. \hfill $\qed$
\end{cor}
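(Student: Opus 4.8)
The plan is to verify that the pair $(\{\overline{\Phi}\},\Gamma_k)$ satisfies both conditions of Definition \ref{minmaxdef}, and then simply invoke Theorem \ref{minmaxthm} to conclude that $c_k$ is a critical value. Condition (1) requires producing, for each regular value $a$, an $\varepsilon>0$ and a map in $\mathcal H=\{\overline{\Phi}\}$ that strictly decreases the sublevel set. This is essentially already handled by the discussion preceding Lemma \ref{barphicollapse}: the map $\overline{\Phi}$ decreases sublevel sets at regular values, and the collapse $K^a\searrow \overline{\Phi}(K^a)$ of Lemma \ref{barphicollapse} removes exactly the pair at the regular value. So the bulk of the proof is verifying condition (2), the stability of $\Gamma_k$ under $\overline{\Phi}$, which is exactly the content of the preceding theorem's proof.

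Accordingly, first I would take $L\in\Gamma_k$ and unpack the definition: there is some $a\in\zr$ with $K^a\searrow L$ and $\text{dgcat}_K(K^a)\ge k-1$. The goal is to exhibit a sublevel complex $K^{a'}$ collapsing onto $\overline{\Phi}(L)$ with $\text{dgcat}_K(K^{a'})\ge k-1$. The natural candidate is to keep $a'=a$ and argue that $\overline{\Phi}(L)$ is reachable by collapse from $K^a$ itself. Here the key structural observation is that collapse is functorial under $\overline{\Phi}$ in the sense that $K^a\searrow L$ implies $\overline{\Phi}(K^a)\searrow \overline{\Phi}(L)$; combined with Lemma \ref{barphicollapse}'s $K^a\searrow\overline{\Phi}(K^a)$, one chains the two collapses to get $K^a\searrow\overline{\Phi}(L)$. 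Since the witnessing sublevel complex is still $K^a$, the category bound $\text{dgcat}_K(K^a)\ge k-1$ is unchanged, so $\overline{\Phi}(L)\in\Gamma_k$.

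The step I expect to require the most care is justifying that $K^a\searrow L$ implies $\overline{\Phi}(K^a)\searrow\overline{\Phi}(L)$. The map $\overline{\Phi}$ is defined cell-by-cell through the flow operator $\phi$, and it is not immediately obvious that applying it commutes with an elementary collapse. One must check that removing a free pair $\{\sigma<\tau\}$ from $K^a$ corresponds, after applying $\overline{\Phi}$, to removing (a possibly trivial) free pair from $\overline{\Phi}(K^a)$, so that the collapsing sequence descends through $\overline{\Phi}$. Since $\phi$ fixes critical cells and pushes regular cells strictly downward in $f$-value by Proposition \ref{flowgoesdown}, the images $\overline{\Phi}(L)\subseteq\overline{\Phi}(K^a)$ are nested subcomplexes, and the collapse can be read off by processing the removed pairs in decreasing $f$-order exactly as in the proof of Lemma \ref{barphicollapse}. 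I would lean on that lemma's mechanism rather than reprove the functoriality from scratch.

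Once condition (2) is in hand, the proof closes in one line: both conditions of Definition \ref{minmaxdef} hold, so $(\{\overline{\Phi}\},\Gamma_k)$ is min-max data, and Corollary \ref{minmaxcor} then follows immediately from Theorem \ref{minmaxthm} applied to this pair, giving that $c_k=\min_{L\in\Gamma_k}\max_{\sigma\in L}f(\sigma)$ is a critical value of $f$.
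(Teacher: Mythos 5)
Your proposal is correct and takes essentially the same route as the paper: the paper proves the preceding theorem by taking $L\in\Gamma_k$ with $K^a\searrow L$, asserting $\overline{\Phi}(K^a)\searrow\overline{\Phi}(L)$, chaining this with Lemma~\ref{barphicollapse} so that the same witness $K^a$ (and hence the same category bound) works for $\overline{\Phi}(L)$, and then deduces the corollary immediately from Theorem~\ref{minmaxthm}. The only difference is cosmetic: you flag and sketch a justification for the step $\overline{\Phi}(K^a)\searrow\overline{\Phi}(L)$, which the paper simply asserts without proof.
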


As a consequence of this, we obtain the following Lusternik-Schnirelmann theorem, originally proved in \cite{scoville} (Theorem 26).

\begin{cor}\label{lusternikthm}
Let $f:K\to\zr$ be a discrete Morse function with $m$ critical values. Then $\text{\em dgcat}(K)+1\le m$.
\end{cor}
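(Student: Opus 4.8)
The plan is to run the classical Lusternik--Schnirelmann counting argument on the critical values $c_k$ produced by Corollary \ref{minmaxcor}, using the injectivity of $f$ to force these values to be pairwise distinct. Write $n=\text{dgcat}(K)=\text{dgcat}_K(K)$. First I would record that the families are nested: if $L\in\Gamma_{k+1}$, witnessed by $K^a\searrow L$ with $\text{dgcat}_K(K^a)\ge k$, then in particular $\text{dgcat}_K(K^a)\ge k-1$, so $L\in\Gamma_k$; hence $\Gamma_1\supseteq\Gamma_2\supseteq\cdots$ and therefore $c_1\le c_2\le\cdots$. Next I would check that $\Gamma_{n+1}\ne\emptyset$: taking $a$ large enough that $K^a=K$ and using the trivial collapse $K\searrow K$ together with $\text{dgcat}_K(K)=n\ge n$ shows $K\in\Gamma_{n+1}$. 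Consequently $c_1,\dots,c_{n+1}$ are all defined and, by Corollary \ref{minmaxcor}, each is a critical value of $f$. The corollary will follow once I show that these $n+1$ critical values are pairwise distinct, for then $m\ge n+1=\text{dgcat}(K)+1$.

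Because we may assume $f$ injective, each critical value is attained at a single critical simplex, so it suffices to rule out $c_k=c_{k+1}$ for every $k$. The key is to reinterpret $c_k$ as a threshold. Using Lemma \ref{barphicollapse} (so that $\overline{\Phi}(K^a)\in\Gamma_k$ realizes the most economical collapse of $K^a$) together with the fact that $c_k$ is a critical value, I would argue that $c_k$ equals the least critical value $c$ for which $\text{dgcat}_K(K^{c})\ge k-1$. Granting this, suppose for contradiction that $c_k=c_{k+1}=c$, and let $\sigma$ be the unique critical simplex with $f(\sigma)=c$; choose $\varepsilon>0$ so small that $(c-\varepsilon,c)$ contains no critical value. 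The threshold description then yields $\text{dgcat}_K(K^{c-\varepsilon})\le k-2$ while $\text{dgcat}_K(K^{c})\ge k$, so passing the single critical value $c$ raises $\text{dgcat}_K$ by at least two.

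The main obstacle is to contradict this last inequality, and this is where the genuine content of the argument lies. By DMT Part B (Theorem \ref{theorem:dmt-b}) we have $K^{c}=K^{c-\varepsilon}\cup_{\dot{\sigma}}\sigma$, so $K^c$ is obtained from $K^{c-\varepsilon}$ by attaching the single closed simplex $\overline{\sigma}$, which is collapsible and hence satisfies $\text{dgcat}_K(\overline{\sigma})=0$. What I still need are two structural properties of geometric category in this collapse-relative setting: (a) subadditivity, $\text{dgcat}_K(A\cup B)\le \text{dgcat}_K(A)+\text{dgcat}_K(B)+1$ for subcomplexes $A,B\subseteq K$; and (b) invariance under collapse, so that the $\Gamma_k$-thresholds may legitimately be computed on $K^{c-\varepsilon}$ and $K^{c}$ themselves. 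Combining these gives $\text{dgcat}_K(K^{c})\le \text{dgcat}_K(K^{c-\varepsilon})+\text{dgcat}_K(\overline{\sigma})+1\le (k-2)+0+1=k-1$, contradicting $\text{dgcat}_K(K^{c})\ge k$. Establishing (a), (b), and the threshold reformulation rigorously --- essentially importing the basic calculus of $\text{dgcat}_K$ from \cite{scoville} --- is the part that requires care; the remaining counting is then immediate, and we conclude $c_1<c_2<\cdots<c_{n+1}$, whence $m\ge n+1=\text{dgcat}(K)+1$.
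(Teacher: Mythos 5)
Your overall skeleton coincides with the paper's: both arguments take the values $c_1,\dots,c_{\text{dgcat}(K)+1}$ supplied by Corollary \ref{minmaxcor}, argue that they are pairwise distinct, and conclude $m\ge \text{dgcat}(K)+1$ by counting. (Your check that $\Gamma_{n+1}\ne\emptyset$, via $K\in\Gamma_{n+1}$ using the trivial collapse, is a detail the paper leaves implicit and is worth making explicit.) The divergence is entirely in the strictness step: the paper disposes of it in a single clause, ``since $f$ is excellent, $c_{k-1}<c_k$,'' appealing only to the injectivity guaranteed by Lemma \ref{excellent}, whereas you attempt the classical Lusternik--Schnirelmann deformation argument. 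You have correctly located where the real work lies --- injectivity alone does not obviously forbid $c_k=c_{k+1}$, since nothing in the definition of the $\Gamma_k$ prevents two consecutive min-max values from being attained at the same critical simplex --- but as written your proposal replaces the paper's one-line assertion with other assertions rather than a proof.

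Concretely, two of the three claims you defer are not safely importable from \cite{scoville} in the form you use them. First, the threshold identity $c_k=\min\{c:\text{dgcat}_K(K^{c})\ge k-1\}$ is problematic in both directions: an element $L\in\Gamma_k$ is a collapse of some $K^a$, and collapsing can strictly lower $\max_{\sigma\in L}f(\sigma)$ below any such threshold; conversely, $K^a$ may contain simplices with $f$-value exceeding $a$ (the higher-valued face of a regular pair enters $K^a$ through the closure operation), so even $\max_{\sigma\in K^a}f(\sigma)\le a$ fails in general. Second, your subadditivity (a) holds by the cover-union argument for the \emph{precategory} $\widetilde{\text{dgcat}}_K$, which only yields $\text{dgcat}_K(K^{c})\le\widetilde{\text{dgcat}}_K(K^{c-\varepsilon})+1$; to get $\text{dgcat}_K(K^{c-\varepsilon})$ on the right, as your contradiction requires, you must transport an optimal collapse $K^{c-\varepsilon}\searrow L'$ across the attachment $K^{c}=K^{c-\varepsilon}\cup_{\dot{\sigma}}\sigma$ of Theorem \ref{theorem:dmt-b}, and this can fail: a free face used in that collapsing sequence may be a codimension-one face of $\sigma$ and is no longer free in $K^{c}$. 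Thus the inequality $\text{dgcat}_K(K^{c})\le\text{dgcat}_K(K^{c-\varepsilon})+1$ is precisely the hard content of the strictness step, not bookkeeping, and until it (or the paper's bare appeal to excellence) is established, the chain $c_1<c_2<\cdots<c_{n+1}$ --- on which both your proof and the paper's depend --- remains open in your write-up.
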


\begin{proof}
By Corollary \ref{minmaxcor} each $c_k$ is a critical value of $f$. We may assume that the global minimum of $f$, which occurs at some vertex $v$ is $f(v)=0$. Then $c_1=0$ and $K^{c_1}$ contains one critical simplex. Proceeding inductively, suppose $K^{c_{k-1}}$ has at least $k-1$ critical simplices and consider $K^{c_k}$. Since $f$ is excellent, $c_{k-1}<c_k$ and so $f^{-1}(c_k)$ contains at least one new critical simplex. Thus $K^{c_k}$ contains at least $k$ critical simplices. So if $c_1<c_2<\cdots <c_{\text{dgcat}(K)+1}$ are the distinct critical values then $K^{c_{\text{dgcat}(K)+1}}\subseteq K$ contains at least $\text{dgcat}(K)+1$ critical simplices. Therefore $\text{dgcat}(K)+1\le m$.
\end{proof}

\end{document}